\newtheorem{thm}{Theorem}[section]
\newtheorem{lem}[thm]{Lemma}
\newtheorem{prop}[thm]{Proposition}
\numberwithin{equation}{section}
\newcommand{\mn}{\sqrt{-1}}
\newcommand{\de}{\partial}
\newcommand{\dbar}{\overline{\partial}}
\newcommand{\md}{\mathrm{d}}
\newcommand{\dist}{\mathrm{dist}}
\newcommand{\dmu }{\mathrm{d}\mu }
\newcommand{\mS }{\mathcal{S}}
\newcommand{\dmut}{\mathrm{d}\mu(t)}
\newcommand{\ov}{\overline}
\newcommand{\lf}{\left}
\newcommand{\rt}{\right}
\newcommand{\intm }{\int_{M} }
\newcommand{\n}{\nabla}
\newcommand{\C}{ \mathbb{C}}
\newcommand{\ld}{\lambda}
\newcommand{\bag}{\begin{aligned}}
\newcommand{\eag}{\end{aligned}}
\newcommand{\R}{ \mathbb{R}}
\newcommand{\be}{\begin{equation}}
\newcommand{\ee}{\end{equation}}
\newcommand{\ti}{\tilde}
\newcommand{\ba}{\begin{eqnarray}}
\newcommand{\ea}{\end{eqnarray}}
\newcommand{\no}{\nonumber}
\newcommand{\al}{\alpha}
\begin{document}
\begin{CJK}{GBK}{song}
\title{Isoperimetric Inequality along the Twisted K\"{a}hler-Ricci Flow}
\makeatletter
\let\uppercasenonmath\@gobble
\let\MakeUppercase\relax
\let\scshape\relax
\makeatother
\author{Shouwen Fang$^1$, Tao Zheng$^2$}
\address{$^1$School of Mathematical Science, Yangzhou University,
Yangzhou, Jiangsu 225002, P. R. China }
\email{shwfang@163.com}
\address{$^2$Institut Fourier, Universit\'{e} Grenoble Alpes, 100 rue des maths,
Gi\`{e}res 38610, France}
\email{zhengtao08@amss.ac.cn}
\subjclass[2010]{53C21, 53C44}
\thanks{$^1$ Supported by National Natural Science Foundation of China grant Nos.11401514 and 11371310.}
\thanks {$^2$ Supported by National Natural Science Foundation of China grant No. 11401023, and the author's post-doc is supported by the European Research Council (ERC) grant No. 670846 (ALKAGE)}
\keywords{isoperimetric inequality, twisted K\"{a}hler-Ricci flow, gradient estimate, the Poincar\'{e} inequality}
\begin{abstract}
We prove a uniform isoperimetric inequality for all time along the twisted K\"{a}hler-Ricci flow on Fano manifolds.
\end{abstract}
\maketitle
\section{Introduction}
The classical \emph{isoperimetric inequality} states that for Borel set $\Omega\in \mathbb{R}^n (n\geq2)$ with finite Lebesgue measure $|\Omega|$, the ball with the same measure has a lower perimeter, that is,
\begin{equation}
\label{ciso}
P(\Omega)\geq n\omega_{n}^{\frac{1}{n}}|\Omega|^{\frac{n-1}{n}},
\end{equation}
where $P(\Omega)$ is the distributional perimeter of $\Omega$ which coincides with the classical $n-1$-dimensional area of $\de \Omega$ if $\Omega$ has smooth boundary and $\omega_{n}$ is the volume of unit ball in $\mathbb{R}^n$. It is also well-known that equality holds in  \eqref{ciso} if and only if $\Omega$ is a ball $B$ in $\mathbb{R}^n$. De Giorgi \cite{degior1}(see also \cite{degior2} for English version) proved (\eqref{ciso}) for the first time in the general framework of sets with finite perimeter. One can find various kinds of proofs and different formulations of the isoperimetric inequality  in \cite{bla,bur,chavel,fusco,oss,talen} and references therein.

In the case of geometric flows, Hamilton \cite{ha} obtained an isoperimetric estimate for the Ricci flow on the two sphere.
For complex $2$-dimensional K\"{a}hler-Ricci flow, Chen and Wang \cite{chenwango} proved that the isoperimetric constant for $(M,\,g(t))$ is bounded from below  by a uniform constant. Here $g(t)$ is the solution of the K\"{a}hler-Ricci flow (see \eqref{tkrf} with $\theta_{i\overline{j}}\equiv0$).  Later, Tian and Zhang \cite{tianzhang} proved that, for all complex $n$-dimensional K\"{a}hler-Ricci flow on Fano manifolds, the isoperimetric constant for $(M,\,g(t))$ is also bounded from below  by a uniform constant.

In this paper, we obtain a uniform estimate of lower bound on isoperimetric constant  along the twisted  K\"{a}hler-Ricci flow on Fano manifolds. To be precise, we need some notations and definitions.
Let $M$ be a real $n(=2m)$ dimensional Fano manifold with K\"{a}hler form $\omega_0$ associated to the K\"{a}hler metric  $g_0$. We consider the twisted K\"{a}hler-Ricci flow (see \cite{cabor,liu,zhangzhang} and the references therein)
\begin{equation}
\label{tkrf}
\left\{
\begin{array}{rl}
\frac{\partial}{\partial t} g_{i\ov{j}}(x,t)=&-R_{i\ov{j}}(x,t)+\theta_{i\overline{j}}(x)+g_{i\overline{j}}(x,t),\\
g_{i\overline{j}}(x,0)=&(g_0)_{i\overline{j}}(x ),
\end{array}
\right.
\end{equation}
where $\theta $ is a closed  semi-positive $(1,1)$ form and
$$
[2\pi c_1(M)]=[\omega(x,t)+\theta].
$$
Here $\omega(x,t)=\mn g_{i\overline{j}}(x,t)\md z^i\wedge\md \overline{z}^j$ associated to  the K\"{a}hler metric $g(x,t)$.   For convenience, we denote
$$
\mS_{i\overline{j}}(x,t)=R_{i\ov{j}}(x,t)-\theta_{i\ov{j}}(x)
$$
and
$$
S(x,\,t)=2\sum\limits_{i,j=1}^m g^{\ov{j}i}(x,\,t)\mS_{i\ov{j}}(x,\,t).
$$
We know that
\begin{prop}\label{prop1}
For the twisted K\"{a}hler-Ricci flow \eqref{tkrf} on Fano manifolds, there  exist uniform positive constants $C$, $\kappa$ and $C_S$ such that
\begin{enumerate}
\item[(a)] $|S(x,t)|\leq C$,
\item[(b)] $|\mathrm{diam}(M,\,g(t))|\leq C$,
\item[(c)] $\|h\|_{C^{1}}\leq C$, where from $\de\ov{\de}$-lemma, $h\in C^{\infty}(M,\,\mathbb{R})$ satisfies
           \begin{equation}
           \label{kpotential}
           \mS_{i\ov{j}}-g_{i\ov{j}}=\de_{i}\de_{\ov{j}}h,
           \end{equation}
\item[(d)] $\mathrm{Vol}_{g(t)}(B(x,r,t))\geq \kappa r^n$,  for any $t>0$ and $r\in (0,\,\mathrm{diam}(M,\,g(t)))$,
\item[(e)] $\mathrm{Vol}_{g(t)}(B(x,r,t))\leq \kappa^{-1} r^n$, for any $t>0$ and $r>0$,
\item[(f)] for any $f \in W^{1,2}(M)$,
           $$
           \lf(\intm |f|^{\frac{2n}{n-2}}\dmut\rt)^{\frac{n-2}{n}}\leq C_S\lf(\intm  \big[|\n f|_{g(t)}^2+f^2\big]\dmut\rt).
           $$
\end{enumerate}
\end{prop}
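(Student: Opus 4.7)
The six items are the twisted analogues of Perelman's celebrated estimates for the K\"ahler-Ricci flow on Fano manifolds, and my plan is to adapt Perelman's scheme to the flow (\ref{tkrf}) and verify that the additional terms coming from the fixed background form $\theta$ do not destroy any of the underlying monotonicity or maximum-principle arguments. The crucial structural input is that $\theta$ is smooth and fixed once and for all, hence uniformly bounded on $M$, and that the cohomological identity $[\omega(t)+\theta]=2\pi c_{1}(M)$ is preserved along (\ref{tkrf}), so that the $\de\ov{\de}$-lemma yields the twisted Ricci potential $h$ of (\ref{kpotential}) for all $t$.

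I would treat (a) and (c) first. Differentiating (\ref{kpotential}) along (\ref{tkrf}) and fixing $h$ uniquely by $\int_{M}e^{-h}\omega(t)^{m}=\mathrm{const}$, one obtains a parabolic equation of the form $(\de_{t}-\Delta)h=h-a(t)$ with $a(t)$ uniformly controlled in terms of $\tr_{g(t)}\theta$. A Perelman-type maximum-principle argument applied in turn to $h$, to $|\n h|^{2}+h^{2}$, and to the coupled quantity $\Delta h-|\n h|^{2}-h$ along the conjugate heat flow then produces the uniform $C^{0}$- and $C^{1}$-bounds of $h$ stated in (c); tracing (\ref{kpotential}) yields $S=2m+\Delta h-2\tr_{g(t)}\theta$, and the bound (a) follows since $\tr_{g(t)}\theta$ is controlled by $|\theta|_{g_{0}}$ together with the comparison between $g(t)$ and $g_{0}$ implicit in the $C^{1}$-bound on $h$.

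For (b), (d) and (f) I would pass to Perelman's $\mathcal{W}$-entropy adapted to the twisted flow. A direct computation shows that $\tfrac{\md}{\md t}\mathcal{W}$ acquires extra $\theta$-terms, but the semi-positivity $\theta\geq 0$ preserves the sign of the dominant quadratic piece while the remaining terms are pointwise controlled by (a) and the boundedness of $\theta$; Perelman's $\kappa$-non-collapsing argument thus goes through, giving (d), and the diameter bound (b) follows by Perelman's standard contradiction argument using (a), (d) and pseudo-locality. For the Sobolev inequality (f) I would follow Q.~Zhang and R.~Ye: (a), (b) and (d) together imply that Perelman's $\mu$-invariant of $g(t)$ is uniformly bounded from below along (\ref{tkrf}), and the standard passage from the resulting log-Sobolev inequality to an inequality of the form stated completes the proof. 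The non-inflating estimate (e) is obtained in the same spirit by combining (a), (c) and the non-collapsing (d) with a standard Vitali-type covering argument.

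The main technical obstacle throughout is verifying the monotonicity of the twisted $\mathcal{W}$-entropy in a form strong enough to imply $\kappa$-non-collapsing: the $\theta$-correction terms appearing in $\tfrac{\md}{\md t}\mathcal{W}$ must be bookkept carefully, using both the semi-positivity $\theta\geq 0$ (to preserve the sign of the principal term) and the pointwise boundedness of $\theta$ (to absorb the remaining terms into a uniformly bounded additive drift). Once this monotonicity is secured, all of (a)--(f) follow by essentially routine adaptations of the well-known arguments for the untwisted K\"ahler-Ricci flow.
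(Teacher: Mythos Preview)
The paper does not prove Proposition~\ref{prop1}: it simply records that items (a)--(d) are established in \cite{cabor,liu} and items (e)--(f) in \cite{fangzheng}, and then uses them as black-box inputs for the rest of the paper. Your outline is therefore not a competing proof but a sketch of what those references do, and in broad strokes it is accurate: Collins--Sz\'ekelyhidi and Liu adapt Perelman's scheme (maximum principle for the potential $h$, $\mathcal{W}$-entropy monotonicity, non-collapsing, diameter bound) to the twisted flow, and \cite{fangzheng} carries out the log-Sobolev to Sobolev passage together with the non-inflating estimate.

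There is, however, a concrete error in your sketch. Tracing (\ref{kpotential}) gives $S=n+\Delta h$ with \emph{no} residual $\tr_{g(t)}\theta$ term, since by definition $S=2g^{\ov{j}i}\mS_{i\ov{j}}$ already has the $\theta$-contribution subtracted. This matters because your proposed control of $\tr_{g(t)}\theta$ is false: a uniform $C^{1}$-bound on the Ricci potential $h$ does \emph{not} yield any comparison between $g(t)$ and $g_{0}$ --- the metrics along (\ref{tkrf}) are in general not uniformly equivalent to the initial one, and establishing such equivalence would be far stronger than anything asserted in the proposition. Had the $\tr_{g(t)}\theta$ term genuinely been present, your argument for (a) would not close. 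As it stands the misstep is harmless for (a), but you should be aware that metric comparison is unavailable in this setting.
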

Here and henceforth, by a uniform constant we mean  a constant depending only on the initial data on $M$.
Items (a)-(d) in Proposition \ref{prop1} can be founded in \cite{cabor,liu} and items (e)-(f) in Proposition \ref{prop1} can be founded in \cite{fangzheng}.
Since the volume of $(M,\,g(t))$ is a constant, from item (e) in Proposition \ref{prop1}, there exists a uniform constant $\beta>0$ such that
$$
\mathrm{diam}(M,\,g(t))>\beta.
$$
In the case of K\"{a}hler-Ricci flow, i.e., $\theta_{i\ov{j}}\equiv0$,  Items (a)-(d) in Proposition \ref{prop1} is due to Perelman (See \cite{sesumtian}). Item (e)  in Proposition \ref{prop1} belongs to \cite{chenwang,zhangqi3} and Item (f)  was established by \cite{ye1,zhangqi1} (see also \cite{zhangqi2}).

As a consequence of Proposition \ref{prop1}, we can deduce our main theorem as follows.
\begin{thm}\label{thmiso}
For the twisted K\"{a}hler-Ricci flow (\ref{tkrf}) on Fano manifolds, for any $u \in C^{\infty}(M,\,\mathbb{R})$, there holds
the isoperimetric inequality
$$
\|f(x)-f_M\|_{L^{\frac{n}{n-1}}(M)}\leq C_I\|\n f\|_{L^1(M)},\quad f\in \,C^{\infty}(M,\,\mathbb{R}),
$$
where $C_I>0$ is a uniform constant.
\end{thm}
From Theorem \ref{thmiso}, we can get a uniform lower bound for the isoperimetric constant in $(M,\,g(t))$ as follows.
There holds
$$
I(M,\,g(t)):=\inf_{V\subset M}\frac{\mathrm{Area}_{g(t)}(\partial V)}{\lf[\min\lf\{\mathrm{Vol}_{g(t)}(V),\,\mathrm{Vol}_{g(t)}(M-V)\rt\}\rt]^{\frac{n-1}{n}}}\geq \delta,
$$
where $V$ is a subdomain of $M$ such that $\de V$ is an $n-1$ dimensional submanifold of $M$, and $\delta$ is a positive constant depending only on initial metric $g_0$. A proof can be found in \cite[Section 5.1]{cln}.

In the case of K\"{a}hler-Ricci flow, the theorem and the isoperimetric inequality above were obtained by Tian and Zhang \cite{tianzhang}.

\noindent {\bf Acknowledgements}
This work was carried out while the authors were visiting Mathematics Department at Northwestern University. We would like to thank Professor Valentino Tosatti and Professor Ben Weinkove for hospitality and helpful discussions. We also thank  Professor Qi S. Zhang for offering us more details about heat kernel and Wenshuai Jiang for some helpful conversations. The authors are also grateful
to the anonymous referees and the editor for their careful reading and helpful suggestions which
greatly improved the paper.
\section{The uniform Poincar\'{e} inequality along the twisted K\"{a}hler-Ricci flow}
In this section, we will use the estimate of the first non-zero eigenvalue of self-adjoint elliptic operator
\begin{equation}
\label{twistlap}
\Delta_{h}f=\ov{\de}^{\ast}\dbar f-\sum\limits_{i=1}^{m}\n^i f\n_i h,\quad \forall\; f\in C^{\infty}(M,\,\C),
\end{equation}
where $h$ is defined in (\ref{kpotential}) and $\ov{\de}^{\ast}$ is the conjugated operator of $\ov{\de}$ with respect to $g(t)$,
in order to get the uniform Poincar\'{e} inequality  along the twisted K\"{a}hler-Ricci flow \eqref{tkrf}.
\begin{lem}
Let $g(t)$ be the solution to the twisted K\"{a}hler-Ricci flow \eqref{tkrf} on $M\times[0,\,\infty)$. Then for any $f\in C^{\infty}(M,\,\R)$, there holds the uniform Poincar\'{e} inequality
\begin{equation}
\label{poincareinequality}
\intm  \lf(f-\frac{1}{\mathrm{Vol}_{g(t)}(M)}\intm f \dmut\rt) ^2 \dmut\leq C\intm |\n f|_{g(t)}^2\dmut,
\end{equation}
with a uniform constant $C$.
\end{lem}
\begin{proof}
Let $\lambda_1$ be the first non-zero eigenvalue of $\Delta_h$ defined as in \eqref{twistlap}.
Then for any $\varphi\in C^{\infty}(M,\,\R)$ with $\intm \varphi e^{h}\dmut=0$, from \cite[Theorem 2.3.4]{futaki}, we get
$$
\bag
\ld_1 \intm |\dbar \varphi|_{g(t)}^2 e^h \dmut
=&\intm \sum\limits_{i,j=1}^{m}\lf(R_{j\ov{i}}(g(t))\n^{j}\varphi\n^{\ov{i}}\ov{\varphi}-\n_{\ov{i}}\n_{j}h \n^{j}\varphi\n^{\ov{i}}\ov{\varphi}+\n^{i}\n^{j}\varphi\n_{i}\n_{j}\overline{\varphi}\rt)e^h\dmut\\
=&\intm \lf(|\dbar \varphi|_{g(t)}^2 +\sum\limits_{i,j=1}^{m}\theta_{j\ov{i}}\n^{j}\varphi\n^{\ov{i}}\ov{\varphi} +\sum\limits_{i,j=1}^{m}\n^{i}\n^{j}\varphi\n_{i}\n_{j}\overline{\varphi}\rt)e^h\dmut,
\eag
$$
where for second equality we use \eqref{kpotential}. This implies $ \ld_1\geq 1. $
Hence, for any $\varphi\in C^{\infty}(M,\,\R)$ with $\intm \varphi e^{h}\dmut=0$, it follows
\be\label{pqine}
\intm \varphi^2e^h\dmut\leq \intm |\dbar \varphi|_{g(t)}^2e^h\dmut.
\ee
Now for any $\psi\in C^{\infty}(M,\,\R)$ with $\intm \psi \dmut=0$, from (\ref{pqine}), it follows
\be\label{pine1}
\intm \lf(\psi-\frac{\intm \psi e^h\dmut}{\intm   e^h\dmut}\rt)^2e^h\dmut\leq \intm |\dbar \psi|_{g(t)}^2e^h\dmut.
\ee
By Item (c) of Proposition \ref{prop1} and (\ref{pine1}), there exists a uniform constant $C$ such that
$$
\bag
C\intm |\dbar \psi|_{g(t)}^2\dmut
\geq& \intm |\dbar \psi|_{g(t)}^2e^h\dmut\\
\geq& \intm \lf(\psi-\frac{\intm \psi e^h\dmut}{\intm   e^h\dmut}\rt)^2e^h\dmut\\
\geq& C^{-1}\intm \lf(\psi-\frac{\intm \psi e^h\dmut}{\intm  e^h\dmut}\rt)^2 \dmut\\
=& C^{-1}\lf(\intm  \psi ^2 \dmut+\lf[\frac{\intm \psi e^h\dmut}{\intm   e^h\dmut}\rt]^2\mathrm{Vol}_{g(t)}(M)\rt)\\
\geq&C^{-1}\intm  \psi ^2 \dmut,
\eag
$$
which implies the Poincar\'{e} inequality \eqref{poincareinequality}.
\end{proof}
Letting $\theta_{i\overline{j}}=0$, we get the uniform Poincar\'e inequality along the K\"ahler-Ricci flow which was obtained by Tian and Zhang \cite{tianzhang} using the estimates of heat kernel.
\section{The uniform gradient estimates along the twisted K\"ahler-Ricci flow}
By Proposition \ref{prop1} and the uniform Poincar\'e inequality \eqref{poincareinequality}, we can prove the isoperimetric inequality.
Since the argument is for every fixed time $t$, for convenience, in the following we will omit the time $t$ in all the quantities such as distance, diameter, norm of the gradient, Laplacian, volume element, etc. Hence our arguments are the same as the ones in the Riemannian case formally. The following lemma is essentially due to Tian and Zheng \cite{tianzhang}.
\begin{lem}
Let $g(t)$ be the solution to the twisted K\"{a}hler-Ricci flow \eqref{tkrf} on $M\times[0,\,\infty)$.
Then
\begin{enumerate}
\item[(a)]for smooth harmonic function $u$ in $B(x_0,\,r)$ with $r\leq \mathrm{diam}(M)$, we have
\be\label{bfo7}
\sup_{x\in B(x_0,\,\frac{r}{2})} |\n u(x)|\leq C\frac{1}{r}\lf(\frac{1}{r^n}\int_{B(x_0,\,r)}u^2\dmu\rt)^{\frac{1}{2}},
\ee
where $C$ is a uniform constant.
\item[(b)] for a non-negative smooth funtion $u$ on $M\times [0,\,\infty)$ satisfying
\be\label{heatequation}
\de_s u- \Delta u= 0,
\ee
we have
\be\label{bfo21}
|\n u(x,\,s)|\leq \frac{C}{\sqrt{\eta \tilde s}}\lf(\frac{1}{(\eta \tilde s)^{\frac{n+2}{2}}}\int_{s-\eta \tilde s}^s\int_{B(x,\,\sqrt{\eta \tilde s})}u^2\dmu\md v\rt)^{\frac{1}{2}}
\ee
where $0<\eta\leq1$ is a parameter, $\tilde s=\min\{s,\,\mathrm{diam}(M)^2\}$ and $C$ is a unform constant.
\end{enumerate}
\end{lem}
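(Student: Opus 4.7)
The plan is to adapt the classical Nash--Moser iteration to gradient estimates in the presence of the weak Ricci bound from Assumption 3, treating parts (a) and (b) in parallel since they differ only by the usual parabolic vs.\ elliptic bookkeeping. The starting point in each case is the Bochner formula for $v := |\nabla u|^2$. In the elliptic case $\Delta u = 0$ gives
$$
\Delta v = 2 |\nabla^2 u|^2 + 2 R_{ij} \nabla^i u \nabla^j u \geq 2\mS_{ij}\nabla^i u \nabla^j u,
$$
and in the parabolic case $(\Delta - \de_t) v \geq 2\mS_{ij}\nabla^i u\nabla^j u$. Substituting the decomposition $\mS_{ij} = P_{ij}^{k\ell}\n_k\n_\ell L + Q_{ij}^{k\ell} g_{k\ell}$ from Assumption~3, the $Q$-term is harmless because $\|Q\|_\infty\leq1$ contributes only a zeroth order multiple of $v$. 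The main issue is the second-derivative-of-$L$ term, and the key idea is that since $P$ is parallel, one may rewrite
$$
P_{ij}^{k\ell}\n_k\n_\ell L \cdot \n^i u \n^j u = \n_k\!\left(P_{ij}^{k\ell}\n_\ell L \, \n^i u \n^j u\right) - P_{ij}^{k\ell}\n_\ell L \cdot \n_k(\n^i u \n^j u),
$$
so after testing against $\varphi^2 v^{p-1}$ and integrating by parts, the top-order term becomes a divergence (which vanishes against the cutoff) and the leftover picks up at worst $|\n L|_\infty \leq C$ times $|\nabla v||\varphi|v^{p-1}$, which is absorbed by Cauchy--Schwarz into the good term $|\nabla^2 u|^2$ coming from the Bochner formula.

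With this in hand, $v$ satisfies, in the weak sense, an inequality of the form $\Delta v \geq -C v$ (respectively $(\Delta-\de_t) v \geq -Cv$) with $C$ depending only on $\|P\|_\infty,\|Q\|_\infty,\|\n L\|_\infty$. Combined with the $L^2$-Sobolev inequality of Assumption~1, this is exactly the hypothesis for Moser's iteration: testing against $\varphi^2 v^{p-1}$, using the Sobolev embedding, and iterating with $p \to \infty$ over a shrinking family of balls (or parabolic cylinders) yields
$$
\sup_{B(x_0,\,r/2)} v \leq \frac{C}{r^n}\int_{B(x_0,\,r)} v \, \dmu, \qquad \sup_{Q_{\eta\ti t/2}} v \leq \frac{C}{(\eta\ti t)^{(n+2)/2}}\int_{Q_{\eta\ti t}} v\,\dmu\,\md t,
$$
where $Q_\rho$ denotes the parabolic cylinder $[t-\rho,t]\times B(x,\sqrt{\rho})$. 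The fact that $\ti t = \min\{t,d^2\}\leq d^2\leq 1$ keeps the scaling tame and the Sobolev constant usable at all scales permitted by Assumption~2.

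Finally, to convert the right-hand side from $\int v$ to $\int u^2$, we apply the Caccioppoli-type energy inequality: multiplying $\Delta u=0$ (resp.\ the heat equation) by $u\varphi^2$ and integrating gives
$$
\int \varphi^2 |\nabla u|^2 \dmu \leq C \int u^2 |\nabla \varphi|^2 \dmu,
$$
respectively the parabolic analogue which is exactly (\ref{moserr2}) of Lemma~\ref{lemmoser}. Choosing cutoff functions localized on a slightly larger ball (or cylinder) of comparable radius absorbs the factor $1/r^2$ and gives the bounds
$$
\int_{B(x_0,\,3r/4)} v \dmu \leq \frac{C}{r^2}\int_{B(x_0,\,r)} u^2 \dmu, \qquad \int_{Q_{\eta\ti t}} v \leq \frac{C}{\eta\ti t}\int_{Q_{2\eta\ti t}} u^2,
$$
which combined with the preceding sup-estimate and a square root yields (\ref{bfo7}) and (\ref{bfo21}). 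The only genuine obstacle, beyond bookkeeping, is the integration-by-parts trick that converts the $\n^2 L$ term into a controllable expression using the parallel hypothesis on $P$; once that is done the rest is standard Moser iteration in the elliptic and parabolic settings.
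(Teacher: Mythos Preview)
Your approach matches the paper's: Bochner formula, the integration-by-parts trick exploiting that $P$ is parallel to trade $\nabla^2 L$ for $\nabla L$, absorption into the good Hessian term, Moser iteration via Assumption~1, and finally the Caccioppoli inequality (Lemma~\ref{lemmoser} in the parabolic case) to pass from $\int|\nabla u|^2$ to $\int u^2$.

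One caveat worth flagging: the divergence $\nabla_k\bigl(P^{k\ell}_{ij}\nabla_\ell L\,\nabla^i u\,\nabla^j u\bigr)$ does \emph{not} simply ``vanish against the cutoff'' when tested against $\varphi^2 v^{p-1}$. Integration by parts moves $\nabla_k$ onto the test function, producing additional terms of the type $|\nabla L|\,\varphi|\nabla\varphi|\,v^{p}$ and $(p-1)|\nabla L|\,\varphi^2 v^{p-1}|\nabla v|$ that must be estimated alongside your ``leftover''; the paper carries these out explicitly in (\ref{2.17})--(\ref{2.23}). Consequently your reduction to a clean pointwise inequality $\Delta v\geq -Cv$ is an oversimplification---there is a divergence-form correction---but the integrated energy inequality needed for the iteration does hold with the stated constants once these extra terms are absorbed, so the argument goes through.
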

We also need the following lemma whose proof is the same as the ones in Grigor'yan \cite{grigor} and Saloff-Coste \cite{saloff}.
\begin{lem}
Let $g(t)$ be the solution to the twisted K\"{a}hler-Ricci flow \eqref{tkrf} on $M\times[0,\,\infty)$.
Assume that $u$ is a smooth harmonic function in $B(x_0,\,r)$ where $x_0\in M,\,r\in (0,\,\mathrm{diam}(M)]$.
There exists a positive constant $C_1=C_1(C_S,\,p)$ such that
\be\label{hmoser}
\sup_{x\in B(x_0,\,\sigma r)}|u(x)|\leq C_{1}\lf(1+\frac{1}{(\mu-\sigma)^2 r^2}\rt)^{\frac{n}{2p}}\lf( \int_{B(x_0,\,\mu r)}u^p\dmu\rt)^{\frac{1}{p}},
\ee
where $0<p<+\infty$ and $0<\sigma<\mu\leq 1$.

In addition, denote $\tilde s=\min\{s,\,\mathrm{diam}(M)^2\}$ and
$$
Q_{\delta}=B\lf(x_0,\,\delta \sqrt{\eta \ti s}\rt)\times\lf[s -\delta  \eta \ti s,\,s\rt]
,\;
Q=B\lf(x_0,\,\sqrt{\eta \ti s}\rt)\times\lf[s-\eta \ti s,\,s\rt].
$$
If  $u$ is a solution of  heat equation (\ref{heatequation})
in the space time cube
$
M\times [0,\, +\infty),
$
then given $0<\delta<1$, there holds
\be\label{pmoser}
\sup_{(x,v)\in Q_{\delta}}|u(x,\,v)|^p\leq C_2(1-\delta)^{-(n+2)}(\eta\ti s)^{-\frac{n+2}{2}}(1+\eta\ti s)^{\frac{n+2}{2}} \int_{Q}[u(\cdot,\nu)]^p\dmu\md \nu,
\ee
where $C_2$ depends on $C_S$ and $p$ with $0<p<+\infty$ and $0<\eta\leq 1$ is a parameter.
\end{lem}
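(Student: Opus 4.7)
The plan is to adapt the Moser iteration argument from the preceding lemma, but without invoking the Bochner formula or the tensor $\mS$: only the Sobolev inequality from Assumption 1 and the volume comparison from Assumption 2 will be used. Both estimates (\ref{hmoser}) and (\ref{pmoser}) are classical, and I would follow the strategy of Grigor'yan \cite{grigor} and Saloff-Coste \cite{saloff}.

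For the elliptic bound (\ref{hmoser}), I would first test $\Delta u=0$ against $|u|^{2q-1}\mathrm{sgn}(u)\,\psi^2$, where $\psi$ is a Lipschitz cutoff supported in $B(x_0,\mu r)$, equal to $1$ on $B(x_0,\sigma r)$ and satisfying $|\n \psi|\leq 4/((\mu-\sigma)r)$. Integration by parts together with the Cauchy--Schwarz inequality yields the Caccioppoli-type estimate
\[
\int_{B(x_0,\mu r)}\bigl|\n(|u|^q\psi)\bigr|^2\dmu\leq Cq^2\int_{B(x_0,\mu r)}|u|^{2q}|\n\psi|^2\dmu,
\]
and applying (\ref{rsob}) to $|u|^q\psi$ produces the one-step iteration
\[
\|u\|_{L^{2q\chi}(B(x_0,\sigma r))}\leq\Bigl(Cq^2\bigl(1+(\mu-\sigma)^{-2}r^{-2}\bigr)\Bigr)^{1/(2q)}\|u\|_{L^{2q}(B(x_0,\mu r))},
\]
with $\chi=n/(n-2)$. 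Choosing $\gamma_i=p\chi^{i-1}$ (initially for $p\geq 2$) and nested radii $r_i=\sigma r+(\mu-\sigma)r/2^{i-1}$, iterating, and summing the resulting geometric series gives (\ref{hmoser}) in this range; the factor $1+(\mu-\sigma)^{-2}r^{-2}$ (as opposed to $(\mu-\sigma)^{-2}r^{-2}$ alone) comes precisely from absorbing the zeroth-order term of (\ref{rsob}). For $0<p<2$ I would reduce to $p=2$ by the Young-type interpolation and Lemma~4.3 of \cite{hanlin}, exactly as in the proof of the preceding lemma.

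For the parabolic bound (\ref{pmoser}), I would use a space-time cutoff $\psi(x)\eta(t)$ supported in $Q$ and equal to $1$ on $Q_\delta$, and test $\partial_t u-\Delta u=0$ against $|u|^{2q-1}\mathrm{sgn}(u)\,\psi^2\eta^2$. This simultaneously controls $\sup_t\int \psi^2\eta^2|u|^{2q}\dmu$ and the space-time integral of $|\n(|u|^q\psi\eta)|^2$ in terms of $\int\!\!\int|u|^{2q}(|\n\psi|^2+|\eta\eta'|)\dmu\,\md t$, as in (\ref{bfo13})--(\ref{bfo14}). Interpolating between $L^\infty_t L^{2q}_x$ and $L^2_tL^{2qn/(n-2)}_x$ via (\ref{rsob}) yields the parabolic iteration step
\[
\|u\|_{L^{2q\chi}(Q_{\sigma,\theta})}\leq\Bigl(Cq^2\bigl((\mu-\sigma)^{-2}r^{-2}+(\theta-\tau)^{-1}T^{-1}\bigr)\Bigr)^{1/(2q)}\|u\|_{L^{2q}(Q_{\mu,\tau})}
\]
with $\chi=(n+2)/n$, in direct analogy with (\ref{bfo15})--(\ref{bfo17}). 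Iterating between nested cylinders collapsing onto $Q_\delta$ from $Q$ produces (\ref{pmoser}) for $p\geq 2$; the case $0<p<2$ is again handled by Lemma~4.3 of \cite{hanlin}. The additional factor $(1+\eta\ti t)^{(n+2)/2}$ arises when absorbing the zeroth-order term of (\ref{rsob}) at the intrinsic scale $\sqrt{\eta\ti t}$ (so that one compares the Sobolev constant at this scale to the one at unit scale).

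The main obstacle is the parabolic bookkeeping: selecting the cutoffs, the nested cylinders $\{r_i,t_i\}$, and the exponent sequence $\{\gamma_i\}$ so that the telescoping geometric series produces precisely the factors $(1-\delta)^{-(n+2)}(\eta\ti t)^{-(n+2)/2}(1+\eta\ti t)^{(n+2)/2}$ in (\ref{pmoser}). No new conceptual ingredient is needed beyond those already used in the preceding lemma.
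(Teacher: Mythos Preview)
Your proposal is correct and follows exactly the approach the paper indicates: the paper's own proof simply states ``Using the Sobolev inequality (\ref{rsob}), by the standard Moser's iteration, we can deduce (\ref{hmoser}) and (\ref{pmoser}). Here we omit the details of the proof,'' and refers to Grigor'yan \cite{grigor} and Saloff-Coste \cite{saloff}. Your sketch supplies precisely those omitted details---the Caccioppoli estimate from testing the equation against $|u|^{2q-1}\mathrm{sgn}(u)\psi^2$, the one-step gain via (\ref{rsob}), the geometric iteration over nested balls/cylinders, and the reduction to small $p$ via Lemma~4.3 of \cite{hanlin}---and correctly notes that Assumption~3 and the Bochner formula are not needed here since one is iterating on $u$ rather than on $|\nabla u|^2$.
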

Denote by $H(x,\,y,\,s)$ the heat kernel of heat equation (\ref{heatequation}). We have the following estimates for the heat kernel.
\begin{lem}
Let $g(t)$ be the solution to the twisted K\"{a}hler-Ricci flow \eqref{tkrf} on $M\times[0,\,\infty)$. Then we have
\be\label{heatupperbound}
H(x,\,y,\,s) \leq C_1\lf(1+\frac{[\dist(x,\,y)]^2}{s}\rt)^{\frac{n}{2}}  \ti s ^{-\frac{n}{2}} e^{-\frac{[\dist(x,\,y)]^2}{4s}}
\ee
and
\be\label{heatgradient}
|\n_{x} H(x,\,y,\,s)|\leq C_2\lf(1+\frac{[\dist(x,\,y)]^2}{s}\rt)^{\frac{n+1}{2}}\tilde{s}^{-\frac{n+1}{2}}e^{-\frac{[\dist(x,\,y)]^2}{4s}},
\ee
where $\ti s=\min\{s,\,\mathrm{diam}(M)^2\}$, $C_1$ is a uniform constant.
\end{lem}
\begin{proof}
Fix $\ld \in\R$ and a bounded function $\psi$ satisfying $|\n\psi|\leq 1$. For any nice complex function $f$, set
$f_{z}(y)=e^{\ld\psi(y)}\lf[e^{z\Delta}(e^{-\ld\psi}f)\rt](y)$ for $z=se^{\mn\theta}>0,\,s>0,\,|\theta|\leq \frac{1}{2} \varepsilon$, where $0<\varepsilon\ll1$ is a small parameter. Saloff-Coste  \cite{saloff1} proved
\be\label{yongde}
\|f_z\|_{2}^2\leq e^{2\ld^2(1+\varepsilon)s}\|f\|_{2}^2.
\ee
Introduce function
$$
u(y,\,s)=e^{-\ld \psi(y)}f_s(y)=\lf[e^{s\Delta}(e^{-\ld\psi}f)\rt](y),
$$
where $f\in L^{2}(M)$ is a real function.
The function $u$ satisfies the heat equation (\ref{heatequation}).

Thus, from (\ref{pmoser}), we have
\ba\label{pmoserr1}
|u(x,\,s)|^2&\leq&C(1+\eta \ti s)^{\frac{n+2}{2}}(\eta \ti s)^{-\frac{n+2}{2}}
\int_{s-\eta \ti s}^{s}\int_{B\lf(x,\;\sqrt{\eta \ti s}\rt)}|u(z,\,\nu)|^2\dmu\md \nu.
\ea
Here for later use,  we take $0<\eta\ll1$ determined later.

Multiplying both sides of (\ref{pmoserr1}) by $e^{2\ld\psi(x)}$, from (\ref{yongde}), we get
\ba\label{star}
e^{2\ld\psi(x)}|u(x,\,s)|^2&\leq&C(1+\eta \ti s)^{\frac{n+2}{2}}(\eta \ti s)^{-\frac{n}{2}}e^{2|\ld|\sqrt{\eta \ti s}+2\ld^2(1+\varepsilon)s}\|f\|_{L^2(M)}^2.
\ea
Take cut-off function $\varphi(z)$ such that
$\varphi(z)=1$ on $B(y,\,\sqrt{\eta \ti s})$ and $\varphi(z)=0$ on $M-B(y,\,(1+\epsilon)\sqrt{\eta \ti s})$, where $0<\epsilon\ll1$ small enough.
Choosing
$$
f(z)=\frac{\varphi(z)H(x,z,s)}{\|\varphi(z)H(x,z,s)\|_{L^2(M)}},
$$
we obtain
\be\label{pmoser1}
\bag
&e^{\ld(\psi(x)-\psi(y))}\intm e^{2|\ld|\sqrt{\eta \ti s}+\ld(\psi(y)-\psi(z))}H (x,z,s)\frac{\varphi(z)H(x,z,s)}{\|\varphi(z)H(x,z,s)\|_{L^2(M)}}
\md \mu(z)\\
\geq &e^{\ld(\psi(x)-\psi(y))}\int_{B(y,\,(1+\epsilon)\sqrt{\eta \ti s})}  H (x,z,s)\frac{\varphi(z)H(x,z,s)}{\|\varphi(z)H(x,z,s)\|_{L^2(M)}}
\md \mu(z)\\
\geq& e^{\ld(\psi(x)-\psi(y))}\|\varphi(z)H(x,z,s)\|_{L^2(B(y,\,(1+\epsilon)\sqrt{\eta \ti s}))}\\
\geq &e^{\ld(\psi(x)-\psi(y))}\|H(x,z,s)\|_{L^2(B(y,\, \sqrt{\eta \ti s}))}.
\eag
\ee
Considering $H(x,z,s)$ as a function of $z$, from (\ref{star})  and  (\ref{pmoser1}), we deduce
\be\label{heatl2e}
\|H(x,z,s)\|_{L^2(B(y,\, \sqrt{\eta \ti s}))}\leq C(1+\eta \ti s)^{\frac{n+2}{4}}(\eta \ti s)^{-\frac{n}{4}}e^{3|\ld| \sqrt{\eta \ti s} + \ld^2(1+\varepsilon)s-\ld(\psi(x)-\psi(y))}
\ee
By using (\ref{pmoser}), we have
\ba
|H(x,\,y,\,s)|^2&\leq&C(1+\eta \ti s)^{\frac{n+2}{2}}(\eta \ti s)^{-\frac{n+2}{2}}\no\\
&&\int_{s-\eta \ti s}^{s}\int_{B(y,\;\sqrt{\eta \ti s})}|H(x,\,z,\,s)|^2\dmu(z)\md \nu\no\\
&\leq&C(1+\eta \ti s)^{\frac{n+2}{2}}(1+\eta \ti s)^{\frac{n+2}{2}}(\eta \ti s)^{-\frac{n}{2}}(\eta \ti s)^{-\frac{n}{2}}\no\\
&&e^{6|\ld|\sqrt{\eta \ti s}+ 2\ld^2(1+\varepsilon)s-2\ld(\psi(x)-\psi(y))}\no,
\ea
i.e.,
\ba
H(x,\,y,\,s)&\leq&C(1+\eta \ti s)^{\frac{n+2}{4}}(1+\eta \ti s)^{\frac{n+2}{4}}(\eta \ti s)^{-\frac{n}{4}}(\eta \ti s)^{-\frac{n}{4}}\no\\
&&e^{3|\ld|\sqrt{\eta \ti s}+  \ld^2(1+\varepsilon)s- \ld(\psi(x)-\psi(y))}\no.
\ea
Combining (\ref{bfo21}) and (\ref{heatl2e}), we can derive
\be\label{gheat}
\bag
|\n_{x} H(x,\,y,\,s)|\leq
&\frac{C}{\sqrt{\eta \ti s}}\lf(\frac{1}{(\eta \ti s)^{\frac{n+2}{2}}}\int_{s-\eta \ti s}^t\int_{B(x,\,\sqrt{\eta \ti s})}[ H(y,\,z,\,\nu)]^2\dmu(z)\md \nu\rt)^{\frac{1}{2}}\\
\leq&C(1+\eta \ti s )^{\frac{n+2}{4}}(\eta \ti s)^{-\frac{n+1}{2}}e^{3|\ld|\sqrt{\eta \ti s} + \ld^2(1+\varepsilon)s-\ld(\psi(x)-\psi(y))}.
\eag
\ee
Finally, taking $\psi$ such that $\psi(x)-\psi(y)=\dist(x,\,y)$ and
$$
\ld=\frac{\dist(x,\,y)}{2(1+ \varepsilon)s},
\quad
\eta=\frac{1}{10\lf(1+\frac{[\dist(x,\,y)]^2}{s}\rt)},
$$
we can deduce (\ref{heatupperbound}) and (\ref{heatgradient}).
\end{proof}
Denote by
$$
G_{0}(x,\,y):=\int_{0}^{+\infty}\lf(H(x,\,y,\,t)-\frac{1}{\mathrm{Vol}(M)}\rt)\md t
$$ the Green function of Laplacian.
\begin{lem}
\label{lemg}
Let $g(t)$ be the solution to the twisted K\"{a}hler-Ricci flow \eqref{tkrf} on $M\times[0,\,\infty)$.
Then there hold
\begin{enumerate}
\item[(a)] $|G_{0}(x,\,y)|\leq \frac{C}{[\dist(x,\,y)]^{n-2}},\quad x,\,y\in M$,
\item[(b)] $|\n_{x}G_{0}(x,\,y)|\leq \frac{C}{[\dist(x,\,y)]^{n-1}},\quad x,\,y\in M$,
\end{enumerate}
where $C$ is a uniform constant.
\end{lem}
\begin{proof}
For any $f_{0}\in C^{\infty}(M,\,\R)$ with $\intm f_0\dmu=0$,
$$
f(x,\,s)=\intm\lf(H(x,z,s)-\frac{1}{\mathrm{Vol}(M)}\rt)f_0(z)\dmu(z)
$$
is the solution of heat equation (\ref{heatequation}) satisfying
$$
\intm f(x,s)\dmu(x)=0,\quad f(x,\,0)=f_{0}(x).
$$
From the Poincar\'{e} inequality (\ref{poincareinequality}), we get
$$
\de_s\intm [f(x,s)]^2\dmu(x)=-2\intm|\n f(x,\,s)|^2\dmu(x)\leq -C^{-1}\intm [f(x,s)]^2\dmu(x),
$$
which implies
\be\label{pso1}
\intm [f(x,s)]^2\dmu(x)\leq e^{-\frac{t}{C}}\intm [f_0(x)]^2\dmu(x), \quad s>0.
\ee
Combining (\ref{pmoser}) and  (\ref{pso1}), for $t\geq 10\beta^2$, we have
\ba
[f(x,s)]^2
&\leq&\frac{C_1}{\beta^{n+2}}\int_{s-\beta^2}^{s}\intm [f(z,\,\nu)]^2\dmu(z)\md \nu\no\\
&\leq&C_1 e^{-\frac{s}{C}}\intm [f_0(x)]^2\dmu(x)\no,
\ea
i.e.,
\be\label{pso2}
\lf\{\intm\lf(H(x,z,s)-\frac{1}{\mathrm{Vol}(M)}\rt)f_0(z)\dmu(z)\rt\}^2\leq C_1 e^{-\frac{t}{C}}\intm [f_0(x)]^2\dmu(x),
\ee
where $C_1$ is a uniform constant.

For $s\geq 10\beta^2$ and $x$ fixed, taking $f_0(z)=H(x,z,s)-\frac{1}{\mathrm{Vol}(M)}$, from (\ref{pso2}), we can deduce
\be\label{pso3}
\intm\lf(H(x,z,s)-\frac{1}{\mathrm{Vol}(M)}\rt)^2\dmu(z)\leq C_1 e^{-\frac{s}{C}},\quad s\geq 10\beta^2.
\ee
For $x$ fixed, the function $H(x,z,s)-\frac{1}{\mathrm{Vol}(M)}$ of $z$ is also the solution of heat equation (\ref{heatequation}). Thus,
from  (\ref{pmoser}) and  (\ref{pso3}), for $s\geq 10\beta^2$, we can derive
$$
\bag
\lf(H(x,y,s)-\frac{1}{\mathrm{Vol}(M)}\rt)^2
\leq&\frac{C_1}{\beta^{n+2}}\int_{s-\beta^2}^{s}\intm \lf(H(x,z,s)-\frac{1}{\mathrm{Vol}(M)}\rt)^2\dmu(z)\md \nu\\
\leq&  C_1 e^{-\frac{s}{C}},
\eag
$$
i.e.,
\be\label{pso4}
\lf|H(x,y,s)-\frac{1}{\mathrm{Vol}(M)}\rt|\leq C_2 e^{-\frac{s}{C}},
\ee
where $C_2$ and $C$ are uniform constants.

Noticing
$$
G_0(x,\,y)=\int_{0}^{10\beta^2}\lf(H(x,y,s)-\frac{1}{\mathrm{Vol}(M)}\rt)\md s+\int_{10\beta^2}^{\infty}\lf(H(x,y,s)-\frac{1}{\mathrm{Vol}(M)}\rt)\md s,
$$
by (\ref{heatupperbound}) and  (\ref{pso4}), we can prove Item (a).

Item (b) follows from Item (a) and (\ref{bfo7}).
\end{proof}

\section{Proof of Theorem \ref{thmiso}}
In this section, for convenience, denote by $|\Omega|$ the volume of the set $\Omega$ with respect to the metric $g(t)$.
For any $f\in C^{\infty}(M,\,\R)$, the Hardy-Littlewood maximal function $Mf$ is defined by
$$
(Mf)(x):=\sup_{r>0}\frac{1}{\lf|B(x,\,r)\rt|}\int_{B(x,\,r)}|f(\xi)|\dmu(\xi),
$$
and we also define
$$
(I_{\al}f)(x):=\intm |f(\zeta)|\frac{[\dist(x,\,\zeta)]^{\al}}{\lf|B(x,\dist(x,\zeta))\rt|}\dmu(\zeta),\quad 0<\al<n.
$$
First, we give a lemma about the Hardy-Littlewood maximal function as follows.
\begin{lem}
\label{mfunction}
Let $g(t)$ be the solution to the twisted K\"{a}hler-Ricci flow \eqref{tkrf} on $M\times[0,\,\infty)$. Then for any $f\in L^1(M)$ and $\gamma>0$, there holds
$$
\gamma \lf|\{x|(Mf)(x)>\gamma\}\rt|\leq C\|f\|_{L^{1}(M)},
$$
where $C$ depends only on $\kappa$ and $n$.
\end{lem}
\begin{proof} The idea comes from \cite[Chapter 3]{fo} (see also \cite{cw,st}).
For any $x\in \{x|(Mf)(x)>\gamma\}=:S_{\gamma}$, there exists $r_x$ such that
$$
\frac{1}{\lf|B(x,\,r_x)\rt|}\int_{B(x,\,r_x)}|f(\zeta)|\dmu(\zeta)>\gamma.
$$
Obviously,
$$
\{B(x,\,r_x)|x\in S_{\gamma}\}
$$
is an open covering of $S_{\gamma}$.
For any $0<c<|S_{\gamma}|$, from measure theory (see for example Theorem 2.40 in \cite{fo}), there exists a compact set $K$ such that $|K|>c$ and finitely many balls, saying $B(x_{1},\,r_{x_{1}}),\cdots,B(x_{p},\,r_{x_{p}})$, cover $K$.  Let $B(x_{i_1},\,r_{x_{i_1}})$ be the ball with the largest radius in $B(x_{i},\,r_{x_{i}})$, let $B(x_{i_2},\,r_{x_{i_2}})$ be the ball with the largest radius in $B(x_{i},\,r_{x_{i}})$'s that are disjoint from $B(x_{i_1},\,r_{x_{i_1}})$, $B(x_{i_3},\,r_{x_{i_3}})$ the ball with the largest radius in $B(x_{i},\,r_{x_{i}})$'s that are disjoint from $B(x_{i_1},\,r_{x_{i_1}})$ and $B(x_{i_2},\,r_{x_{i_2}})$, and so on until the list of $B(x_{i},\,r_{x_{i}})$  is exhausted.
According to the construction above, if $B(x_{i},\,r_{x_{i}})$ is not the one of the  $B(x_{i_j},\,r_{x_{i_j}})$'s, there is a $j$ such that $B(x_{i},\,r_{x_{i}})\cap B(x_{i_j},\,r_{x_{i_j}})\neq\emptyset$, and if $j$ is the smallest integer with this property, the radius of $B(x_{i},\,r_{x_{i}})$ is at most that of $B(x_{i_j},\,r_{x_{i_j}})$. Therefore $B(x_{i},\,r_{x_{i}})\subset B(x_{i_j},\,3r_{x_{i_j}})$ and then
$$
K\subset \cup_{j} B(x_{i_j},\,3r_{x_{i_j}}).
$$
Therefore, from Item (d) and (e) in Proposition \ref{prop1}, we have
\ba
c&<&|K|\leq \sum_{j}|B(x_{i_j},\,3r_{x_{i_j}})|\leq 3^n\kappa^2\sum_{j} |B(x_{i_j},\,r_{x_{i_j}})|\no\\
&\leq &3^n\kappa^2\sum_{j}\int_{B(x_{i_j},\,r_{x_{i_j}})}\frac{|f(\zeta)|\dmu(\zeta)}{\gamma}\leq \frac{3^n\kappa^2}{\gamma}\|f\|_{L^{1}(M)}\no.
\ea
Letting $c\longrightarrow |S_{\gamma}|$, we can deduce the desired conclusion.
\end{proof}
Next, using Lemma \ref{mfunction}, we obtain
\begin{lem}[see \cite{cdg} for the version of the Euclidean space]
\label{lemia}
Let $g(t)$ be the solution to the twisted K\"{a}hler-Ricci flow \eqref{tkrf} on $M\times[0,\,\infty)$. Then for any $f\in L^1(M)$ and $\delta>0$, there holds
\be\label{i4}
\delta^{\frac{n}{n-\al}}\lf|\lf\{x\in M|(I_{\al}f)(x)>\delta\rt\}\rt|\leq C\|f\|_{L^1(M)}^{\frac{n}{n-\al}},\quad 0<\al<n,
\ee
where $C$ depends only on $\kappa,\,n$ and $\al$.
\end{lem}
\begin{proof}
Denote
$$
(I_{\al,1}f)(x):=\int_{B(x,\,\varepsilon)}|f(\zeta)|\frac{[\dist(x,\,\zeta)]^{\al}}{\lf|B(x,\dist(x,\zeta))\rt|}\dmu(\zeta)
$$
and
$$
(I_{\al,2}f)(x):=\int_{M-B(x,\,\varepsilon)}|f(\zeta)|\frac{[\dist(x,\,\zeta)]^{\al}}{\lf|B(x,\dist(x,\zeta))\rt|}\dmu(\zeta).
$$
Then for $0<\varepsilon<\mathrm{diam}(M)$, we have
\be\label{i1}
\bag
(I_{\al,1}f)(x)
=&\sum\limits_{k=0}^{\infty}\int_{\{2^{-(k+1)}\varepsilon\leq \dist(x,\,\zeta)<2^{-k}\varepsilon\}}|f(\zeta)|\frac{[\dist(x,\,\zeta)]^{\al}}{\lf|B(x,\dist(x,\zeta))\rt|}\dmu(\zeta)\\
\leq&\sum\limits_{k=0}^{\infty}(2^{-k}\varepsilon)^{\al}\lf|B(x,2^{-(k+1)}\varepsilon)\rt|^{-1}\int_{B(x,2^{-k}\varepsilon)}|f(\zeta)|\dmu(\zeta)\\
\leq&C\sum\limits_{k=0}^{\infty}(2^{-k}\varepsilon)^{\al}\lf|B(x,2^{-k}\varepsilon)\rt|^{-1}\int_{B(x,2^{-k}\varepsilon)}|f(\zeta)|\dmu(\zeta)\\
\leq&C(Mf)(x)\varepsilon^{\al},
\eag
\ee
where $C$ depends only on $\kappa$ and $n$.

Thus, Lemma \ref{mfunction} and (\ref{i1}) implies
$$
\lf|\lf\{x\in M|(I_{\al,1}f)(x)>\delta\rt\}\rt|
\leq \lf|\lf\{x\in M|(Mf)(x)>\frac{\delta}{C\varepsilon^{\al}}\rt\}\rt|\\
\leq\frac{C\varepsilon^{\al}}{\delta}\|f\|_{L^{1}(M)},
$$
where $C$ depends only on $\kappa$ and $n$.

By Item (d) and (e) in Proposition \ref{prop1}, we derive
$$
\sup_{\zeta\in M-B(x,\,\varepsilon)}\frac{[\dist(x,\,\zeta)]^{\al}}{\lf|B(x,\,\dist(x,\,\zeta))\rt|}\leq \kappa \sup_{\zeta\in M-B(x,\,\varepsilon)}[\dist(x,\,\zeta)]^{\al-n}
=\kappa \varepsilon^{\al-n},
$$
which implies
\be\label{i2}
(I_{\al,2}f)(x)\leq \kappa \varepsilon^{\al-n}\|f\|_{L^1(M)}.
\ee
From (\ref{i1}) and (\ref{i2}), taking
$$
\varepsilon=\lf(\frac{\|f\|_{L^1(M)}}{(Mf)(x)}\rt)^{\frac{1}{n}},
$$
we have
\be\label{i3}
(I_{\al}f)(x)\leq C\lf[(Mf)(x)\rt]^{\frac{n-\al}{n}}\|f\|_{L^1(M)}^{\frac{\al}{n}},
\ee
where $C$ depends only on $\kappa$ and $n$. We remark that $(I_{\al,2}f)(x)=0$ if $\varepsilon>\mathrm{diam}(M)$.

Combining Lemma \ref{mfunction} and (\ref{i3}), we arrive at
$$
\bag
\lf|\lf\{x\in M|(I_{\al}f)(x)>\mu\rt\}\rt|
\leq &\lf|\lf\{x\in M|(Mf)(x)>\mu^{\frac{n}{n-\al}}C^{-\frac{n}{n-\al}}\|f\|_{L^1(M)}^{-\frac{\al}{n-\al}}\rt\}\rt|\\
\leq&\mu^{-\frac{n}{n-\al}}C^{\frac{n}{n-\al}}\|f\|_{L^1(M)}^{\frac{\al}{n-\al}}C\|f\|_{L^1(M)},
\eag
$$
which implies (\ref{i4}).
\end{proof}
This lemma together with Lemma \ref{lemg} implies
\begin{lem}\label{lemw}
Let $g(t)$ be the solution to the twisted K\"{a}hler-Ricci flow \eqref{tkrf} on $M\times[0,\,\infty)$. Then for any $\mu>0$ and $f\in C^{\infty}(M,\R)$ with $\intm f\dmu=0$, there holds
\be\label{i5}
\delta^{\frac{n}{n-1}}\lf|\lf\{x\in M||f(x)|>\delta\rt\}\rt|\leq C\|\n f\|_{L^1(M)}^{\frac{n}{n-1}},
\ee
where $C$ is a uniform constant.
\end{lem}
\begin{proof}
Since $\Delta f=\Delta f$ and $\intm f\dmu=0$, by integration by parts, we have
\be\label{f1}
\bag
f(x)
=&-\intm G_0(x,\,z)\Delta f(z)\dmu(z)\\
=&-\lim_{r\longrightarrow 0}\int_{M-B(x,\,r)}G_0(x,\,z)\Delta f(z)\dmu(z) \\
=& \lim_{r\longrightarrow 0}\int_{M-B(x,\,r)}\n_{z}G_0(x,\,z)\n f(z)\dmu(z)- \lim_{r\longrightarrow 0}\int_{\de B(x,\,r)}G_0(x,\,z)\de_{\overrightarrow{n}}f(z)\md S(z) \\
=&\intm \n_{z}G_0(x,\,z)\n f(z)\dmu(z),
\eag
\ee
where $\overrightarrow{n}$ is the inward normal vector on $B(x,\,r)$, and we use Item (d) and (e) in Proposition \ref{prop1} and the fact
$$
|G_0(x,z)|\leq \frac{C}{[\dist(x,\,z)]^{n-2}}.
$$
Using Lemma \ref{lemg}, (\ref{f1}), and Item (d) and (e) in Proposition \ref{prop1} together, we have
\be\label{f2}
\bag
|f(x)|
\leq& C\intm  \frac{|\n f(z)|}{[\dist(x,\,z)]^{n-1}}\dmu(z)\\
\leq& C\intm   |\n f(z)|\frac{\dist(x,\,z)}{\lf|B(x,\,\dist(x,z))\rt|}\dmu(z)\\
=&C(I_1|\n f|)(x),
\eag
\ee
where $C$ is a uniform constant.
Thus, the lemma follows from Lemma \ref{lemia} and (\ref{f2}).
\end{proof}
Using Lemma \ref{lemw} and the arguments in \cite{tianzhang}, we can also deduce
\begin{lem}
\label{lemsos}
Let $g(t)$ be the solution to the twisted K\"{a}hler-Ricci flow \eqref{tkrf} on $M\times[0,\,\infty)$. Then for any $f\in C^{\infty}(M,\R)$, there holds
\be\label{wsobstar}
\|f\|_{L^{\frac{n}{n-1}}(M)}\leq C\|\n f\|_{L^1(M)}+C_1\lf[\mathrm{Vol}(M)\rt]^{-\frac{1}{n}}\|f\|_{L^1(M)},
\ee
where
$C$ is a uniform constant.
\end{lem}
Now we can give the proof of our main theorem as follows.
\begin{proof}[Proof of Theorem \ref{thmiso}]
For any $f\in C^{\infty}(M,\,\R)$, from (\ref{wsobstar}), we have
\be\label{isoyong2}
\|f-f_M\|_{L^{\frac{n}{n-1}}(M)}\leq C\|\n f\|_{L^1(M)}+C_1\lf[\mathrm{Vol}(M)\rt]^{-\frac{1}{n}}\|f-f_M\|_{L^1(M)}.
\ee
By using (\ref{f2}), we get
\be\label{isoyong1}
|f(x)-f_M|
\leq C\intm  \frac{|\n f(z)|}{[\dist(x,\,z)]^{n-1}}\dmu(z).
\ee
Applying the discussion in (\ref{i1}), (\ref{isoyong1}), and Item (d) and (e) in Proposition \ref{prop1},  we can deduce
$$
\|f(x)-f_M\|_{L^1(M)}\leq C\mathrm{diam}(M)\|\n f\|_{L^1(M)},
$$
which, by (\ref{isoyong2}) added, implies isoperimetric inequality
$$
\|f(x)-f_M\|_{L^{\frac{n}{n-1}}(M)}\leq C\|\n f\|_{L^1(M)},
$$
where
$C$ is a uniform constant.
\end{proof}


\begin{thebibliography}{99}



\bibitem{bla}
Blashke, W.  \emph{Kreig und Kugel}, de Gruyter, Berlin, 2nd edn. (1956).



\bibitem{bonnesen}
Bonnesen, T. \emph{\"{U}ber die isoperimetriche Defizit ebener Figuren}, Math. Ann. \textbf{91}(1924), 252-268.




\bibitem{bur}
Burago, Y. D. ; Zalgaller, V. A.   \emph{Geometric Inequalities}, Springer-Verlag, New York, 1988; Original russian edition: Geometricheskie neravenstva, Leningrad.



\bibitem{cdg}
Capogna, L.; Danielli, D.; Garofalo, N.  \emph{The geometric Sobolev embedding for vector fields and the isoperimetric inequality},  Comm. Anal. Geom. \textbf{2}(1994), 203-215.



\bibitem{chavel}
Chavel, I.  \emph{Isoperimetric Inequalities}, Cambridge Tracts in Math. 145, Cambridge Univ. Press, Cambridge, U.K. (2001).


\bibitem{chenwango}
Chen, X.; Wang, B.  \emph{Space of Ricci flow I}, Comm. Pure Appl. Math. \textbf{65}(2012), no. 10, 1399-1457.

\bibitem{chenwang}
Chen, X.; Wang, B.   \emph{On the conditions to extend Ricci flow (III)}, Internat. Math. Res. Notices IMRN \textbf{2013}(2013), no. 10, 2349-2367.


\bibitem{cln}
Chow, B.; Lu, P.; Ni, L.  \emph{Hamilton's Ricci flow}, Grad. Stud. math., Vol. 77, American Mathematical Society, Providence, RI; Science Press, New York, 2006.



\bibitem{cw}
Coifman, R. R.; Weiss, G. \emph{ Analyse harmonique non-commutative sur certain spaces homog\`{e}nes}. (French)\'{E}tudes de certaines int\'{e}grals singuli\'{e}res, Lecture Notes in Mathematics \textbf{242}, Springer-Verlag, Berlin-New York, 1971.  v+160 pp..



\bibitem{cabor}
Collins, T; Sz\'{e}kelyhidi, G. \emph{The twisted K\"{a}hler-Ricci flow}, to appear in J. Reine Angew. Math.,
arXiv:1207.5441v1.

%

\bibitem{coulhon}
Coulhon, T.;  Grigor'yan, A.  \emph{On-diagonal lower bounds for heat kernels and Markov chains}, Duke Univ. Math. J., \textbf{89}(1997), 133-199.


\bibitem{fangzheng} Fang, S.; Zheng, T.  \emph{The (logarithmic) Sobolev inequalities along geometric flow and applications}, J. Math. Anal. Appl. \textbf{434} (2016), no. 1, 729-764.



\bibitem{fgw}Franchi, B.;  Gallot, S.; Wheeden, R. L.  \emph{Sobolev and isoperimetric inequalities for degenerate metrics}, Math. Ann. \textbf{300}(1994), 557-571.


\bibitem{fusco}Fusco, N. \emph{The classical isoperimetric theorem}, Rend. Acc. Sci. Fis. Mat. Napoli \textbf{71}(2004), 63-107.











\bibitem{futaki} Futaki,  A. \emph{K\"{a}hler-Einstein metrics and integral invariants}, Lecture Notes in Mathematics, 1314, Springer-Verlag, Berlin, 1988.



\bibitem{fo}Folland, G. B.  \emph{Real analysis}, 2nd ed., Pure Appl. Math. (N.Y.), John Wiley and Sons, New York, 1999.



\bibitem{degior1}De Giorgi, E.  \emph{Sulla propriet\`{a} isoperimetrica dell'ipersfera, nella classe degli insiemi aventi frontiera orientata di misura finita}, Atti Accad. Naz. Lincei Mem. Cl. Sci. Fis. Mat. Nat., Sez. I, \textbf{8}(1958), 33-44.


\bibitem{degior2}De Giorgi, E.   \emph{Selected Papers}(L. Ambrosio, G. Dal Maso, M. Forti, and S. Spagnolo, eds.), Springer-Verlag, New York(2005).




\bibitem{grigor}Grigor'yan, A.  \emph{The heat equation on noncompact Riemannian manifolds}, Mat. Sb. \textbf{182}(1991), no.1, 55-87; translation in Math. USSR-Sb. \textbf{72}(1992), no.1, 47-77.


\bibitem{ha}Hamilton, R. S.  \emph{An isoperimetric estimate for the Ricci flwo on the two-sphere}, Modern Methods in Complex Analysis (Princeton, NJ, 1992), Ann. of Math. Stud., Vol. \textbf{137}(1995), 191-200.





%
%
%
%
%
%
%
%
%
%


\bibitem{liu}
Liu, J. \emph{The generalized K\"{a}hler Ricci flow}, J. Math. Anal. Appl., \textbf{408}(2013) 751-761.

%






\bibitem{oss}
Osserman, R.  \emph{The isoperimetric inequality}, Bull. Amer. Math. Soc. \textbf{84}(1978), 1182-1238.




%
%
%
%
%
%
%
%

\bibitem{saloff}
Saloff-Coste, L. \emph{A notes on Poincar\'{e}, Sobolev, and Harnack inequalities}, Internat. Math. Res. Notices IMRN \textbf{1992}(1992), no.2, 27-38.




\bibitem{saloff1}
Saloff-Coste, L.  \emph{Uniformly elliptic operators on Riemannian manifolds}, J. Differential Geometry \textbf{36}(1992),  417-450.






\bibitem{sesumtian}
\v{S}e\v{s}um, N.; Tian, G. \emph{Bounding scalar curvature and diameter along the K\"{a}hler-Ricci flow (after Perelman)}, Journal of the Institute
of Mathematics of Jussieu, \textbf{7}(2008), 575-587.



\bibitem{st}
Stein, E. M. \emph{Sigular Integrals and Differentiability Properties of Functions}, Princeton Univ. Press, Princeton 1970.




\bibitem{talen}
Talenti, G. \emph{The standard isoperimetric theorem}, in Handbook of Convex Geometry, Vol. A
(P.M. Grueder and J. M. Wills, eds.), 73-123, North Holland Publ. Co., Amsterdam(1993).


\bibitem{tianzhang}
Tian, G.; Zhang, Q. S. \emph{Isoperimetric inequality under K\"{a}hler-Ricci flow},
 Amer. J. Math. \textbf{136} (2014), no. 5, 1155-1173.






\bibitem{ye1} Ye, R. \emph{The logarithmic Sobolev and Sobolev inequalities along the Ricci flow}, Commun. Math. Stat. \textbf{3} (2015), no. 1, 1-36.





\bibitem{zhangqi1} Zhang, Q. S. \emph{A uniform Sobolev inequality under Ricci flow}, Int. Math. Res. Not. IMRN \textbf{2007}, no. 17, Art. ID rnm056, 17pp. Erratum: Int. Math. Res. Not. IMRN \textbf{2007}, no. 19, Art. ID rnm096, 4 pp. Addendum: Int. Math. Res. Not. IMRN \textbf{2008}, no. 1, Art. ID rnm 138, 12 pp.



\bibitem{zhangqi2} Zhang, Q. S. \emph{Sobolev Inequalities, Heat Kernels under Ricci flow}, and the Poincar\'{e} Conjecture, CRC Press, Boca Raton, FL, 2011.



\bibitem{zhangqi3}
 Zhang, Q. S. \emph{Bounds on volume growth of geodesic ball under Ricci flow}, Math. Res. Lett. \textbf{19}(2012), no. 1, 245-253.

\bibitem{zhangzhang}
Zhang, X.; Zhang, X. \emph{Generalized K\"{a}hler-Einstein metrics and energy functionals}, Canad. J. Math. \textbf{66}(2014), 1413-1435.


\end{thebibliography}
\end{CJK}
\end{document}